\documentclass{amsart}

\usepackage{amsmath, amssymb, amsthm}
\usepackage{mathrsfs}
\usepackage{booktabs}
\usepackage{array}
\usepackage{hyperref}
\hypersetup{
    colorlinks=true,
    linkcolor=blue,
    citecolor=blue,
    urlcolor=blue
}

\theoremstyle{plain}
\newtheorem{theorem}{Theorem}[section]
\newtheorem{lemma}[theorem]{Lemma}
\newtheorem{corollary}[theorem]{Corollary}

\theoremstyle{definition}
\newtheorem{definition}[theorem]{Definition}
\newtheorem{example}[theorem]{Example}

\theoremstyle{remark}

\newcommand{\Rad}{\operatorname{Rad}}
\newcommand{\Radg}{\operatorname{Rad}_{\gamma}}
\newcommand{\tsum}{+_{t}}
\newcommand{\tleq}{\leq_{t}}

\title{$t$-$g$-Radical Supplemented Modules}

\author{AYTEN PEKIN}
\address{Department of Mathematics, Istanbul University, Istanbul, Turkey}
\email{aypekin@istanbul.edu.tr}
\author{HAMDULLAH OZKAYA}
\address{Department of Mathematics, Bursa Technical University, Bursa, Turkey}
\email{hamdullah.ozkaya@btu.edu.tr}
\subjclass[2020]{16D10, 16D70, 13C99}
\keywords{$t$-sum term, $g$-radical supplement, supplemented module,
          SSP property, Pr\"{u}fer group}

\date{\today}

\begin{document}

\begin{abstract}
We introduce and study the class of $t$-$g$-radical supplemented modules,
which unifies two independent generalizations of the classical supplemented
module condition: $g$-radical supplements and $t$-sum terms.
A module $M$ is $t$-$g$-radical supplemented if every submodule $N \leq M$
has a $g$-radical supplement that is simultaneously a $t$-sum term of $M$.
We establish closure properties under $t$-sums, quotient modules, and
homomorphic images, prove inheritance by $t$-sum terms, and classify the
standard module classes (simple, semisimple, local, hollow, and
Pr\"{u}fer groups) within this framework.
A key observation is that the class strictly contains the class of
supplemented modules: both $\mathbb{Q}$ and $\mathbb{Z}_{p^{\infty}}$
are $t$-$g$-radical supplemented but not supplemented.
\end{abstract}

\maketitle

% -----------------------------------------------------------
\section{Introduction}
% -----------------------------------------------------------

Let $R$ be a commutative ring with unity and $M$ a unitary left $R$-module.
Recall that a submodule $J \leq M$ is a \emph{supplement} of $N \leq M$ if
$N + J = M$ and $N \cap J \ll J$.
The notion of $g$-radical supplement relaxes this to
$N \cap J \leq \Radg(J)$, where $\Radg$ denotes the generalized radical.
Independently, $t$-sum terms generalize direct summands by allowing small
intersections.
The present paper combines these two notions into the concept of a
$t$-$g$-radical supplemented module.

% -----------------------------------------------------------
\section{Preliminaries}
% -----------------------------------------------------------

\begin{definition}[$t$-Sum Term]
A submodule $J \leq M$ is called a \emph{$t$-sum term} of $M$ if there
exists $K \leq M$ such that $J + K = M$, $J \cap K \ll J$, and
$J \cap K \ll K$.
\end{definition}

Every direct summand is a $t$-sum term; the converse does not hold in
general.
The class of $t$-sum terms is closed under finite sums in modules possessing
the SSP property, defined below.

\begin{definition}[SSP Property]
A module $M$ has the \emph{SSP property} (Summand Sum Property) if the sum
of any two $t$-sum terms of $M$ is again a $t$-sum term of $M$.
\end{definition}

\begin{definition}[$t$-$g$-Radical Supplemented Module]
An $R$-module $M$ is called \emph{$t$-$g$-radical supplemented} if for
every $N \leq M$ there exists a $g$-radical supplement of $N$ in $M$ that
is also a $t$-sum term of $M$; that is, there exists $J \tleq M$ such that
$N + J = M$ and $N \cap J \leq \Radg(J)$.
\end{definition}

We record two foundational results due to Z\"{o}schinger that are used
throughout.

\begin{theorem}[Z\"{o}schinger \cite{zoschinger1974}]\label{thm:zosch1}
If $K$ is a $t$-sum term of $M$, then $\Radg(K) = K \cap \Radg(M)$.
\end{theorem}

\begin{theorem}[Z\"{o}schinger \cite{zoschinger1974}]\label{thm:zosch2}
Let $V$ be a $t$-sum term of $M$ and $K \leq V$.
Then $K \ll M$ if and only if $K \ll V$.
\end{theorem}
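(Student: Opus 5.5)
The plan is to prove the two implications separately. The direction ``$K \ll V \Rightarrow K \ll M$'' is the standard fact that smallness passes up to larger modules and does not need $V$ to be a $t$-sum term. The converse uses the $t$-sum decomposition together with the modular law.

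For ($\Leftarrow$), assume $K \ll V$ and let $L \leq M$ with $K + L = M$. Intersecting with $V$ and applying the modular law (valid since $K \leq V$) gives
\[
V = V \cap (K + L) = K + (V \cap L).
\]
Because $K \ll V$, this forces $V \cap L = V$, i.e.\ $V \leq L$. Hence $K \leq L$, and so $L = K + L = M$. Thus $K \ll M$.

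For ($\Rightarrow$), assume $K \ll M$ and let $X \leq V$ with $K + X = V$; we must show $X = V$. Since $V$ is a $t$-sum term, we choose $W \leq M$ with $V + W = M$, $V \cap W \ll V$ and $V \cap W \ll W$. Then
\[
M = V + W = K + X + W,
\]
and $K \ll M$ gives $X + W = M$. Intersecting with $V$ and applying the modular law (now using $X \leq V$) yields
\[
V = V \cap (X + W) = X + (V \cap W).
\]
Since $V \cap W \ll V$, we conclude $X = V$, so $K \ll V$.

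There is no genuine obstacle. The only point needing care is to apply the modular law in the correct form, which requires $K \leq V$ in the first direction and $X \leq V$ in the second. Note also that only the condition $V \cap W \ll V$ from the definition of a $t$-sum term is used; the condition $V \cap W \ll W$ plays no role here.
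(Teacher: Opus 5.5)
Your proof is correct and complete. The ($\Leftarrow$) direction holds for any submodule $V$. In the ($\Rightarrow$) direction, you use $K \ll M$ to get $X + W = M$, then the modular law to get $V = X + (V \cap W)$, and finally $V \cap W \ll V$ to conclude $X = V$.

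The paper gives no proof of this statement and only cites Z\"{o}schinger, so there is no argument to compare against. Yours is the standard modular-law argument for supplement submodules. As you observe, it proves slightly more: the equivalence holds whenever $V$ is merely a supplement of some $W \leq M$ (that is, $V + W = M$ and $V \cap W \ll V$), so the second condition $V \cap W \ll W$ in the definition of a $t$-sum term is not needed.
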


% -----------------------------------------------------------
\section{Main Results}
% -----------------------------------------------------------

\subsection{Relation to Supplemented Modules}

\begin{theorem}\label{thm:rad_small}
Let $M$ be a $t$-$g$-radical supplemented $R$-module.
If $\Radg(M) \ll M$, then $M$ is supplemented.
\end{theorem}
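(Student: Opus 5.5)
Fix an arbitrary $N \leq M$. Since $M$ is $t$-$g$-radical supplemented, there is a $t$-sum term $J \tleq M$ with $N + J = M$ and $N \cap J \leq \Radg(J)$. We will show that this same $J$ is a supplement of $N$ in the classical sense. Because $N + J = M$ is already given, it only remains to prove $N \cap J \ll J$.

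First we identify $\Radg(J)$ inside $M$. By Theorem~\ref{thm:zosch1}, applied to the $t$-sum term $J$, we get $\Radg(J) = J \cap \Radg(M)$. Hence
\[
N \cap J \leq J \cap \Radg(M) \leq \Radg(M).
\]
The hypothesis $\Radg(M) \ll M$ then gives $N \cap J \ll M$, since any submodule of a small submodule is small.

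Next we transfer smallness from $M$ to $J$. We have $N \cap J \leq J$, $J$ is a $t$-sum term of $M$, and $N \cap J \ll M$. Theorem~\ref{thm:zosch2}, with $V = J$ and $K = N \cap J$, therefore yields $N \cap J \ll J$. Thus $J$ is a supplement of $N$. As $N$ was arbitrary, every submodule of $M$ has a supplement, and $M$ is supplemented.

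The only delicate step is the last one: passing from ``small in $M$'' to ``small in $J$''. This fails for arbitrary submodules, for instance $\mathbb{Z}$ inside $\mathbb{Q}$. It is exactly what the $t$-sum term condition on $J$ secures through Theorem~\ref{thm:zosch2}. The remaining steps are direct substitutions into the definitions and Theorem~\ref{thm:zosch1}.
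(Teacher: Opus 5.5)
Your proposal is correct and is essentially the paper's proof: Theorem~\ref{thm:zosch1} gives $N \cap J \leq \Radg(J) = J \cap \Radg(M) \leq \Radg(M) \ll M$, and Theorem~\ref{thm:zosch2} then transfers smallness from $M$ to the $t$-sum term $J$. You also state explicitly the intermediate step $N \cap J \ll M$ (a submodule of a small submodule is small), which the paper leaves implicit.
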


\begin{proof}
Let $H \leq M$.
Since $M$ is $t$-$g$-radical supplemented, there exists $J \tleq M$ with
$H + J = M$ and $H \cap J \leq \Radg(J)$.
By Theorem~\ref{thm:zosch1},
$\Radg(J) = J \cap \Radg(M) \leq \Radg(M) \ll M$.
By Theorem~\ref{thm:zosch2}, $H \cap J \ll J$.
Hence $J$ is a supplement of $H$ in $M$, and $M$ is supplemented.
\end{proof}

\begin{corollary}\label{cor:fg}
If $M$ is finitely generated and $t$-$g$-radical supplemented, then $M$
is $t$-$g$-supplemented.
\end{corollary}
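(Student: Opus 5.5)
The plan is to reduce the corollary to Theorem~\ref{thm:rad_small}. The term ``$t$-$g$-supplemented'' is not defined among the preliminaries. I will read it as ``every submodule has a supplement that is a $t$-sum term of $M$'', which is the natural strengthening of ``supplemented'' produced by the proof of Theorem~\ref{thm:rad_small}. The argument has two steps: show that $\Radg(M) \ll M$ for finitely generated $M$, and then rerun the proof of Theorem~\ref{thm:rad_small}, keeping track of the fact that the supplement it produces is a $t$-sum term.

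\textbf{Step 1.} For finitely generated $M$, I will show $\Radg(M) \ll M$. First I use the standard fact that $\Radg(M) \leq \Rad(M)$, taking $\Radg(M)$ as the intersection of the essential maximal submodules, or equivalently the sum of the $g$-small submodules. When $M$ is finitely generated, every proper submodule lies in a maximal submodule, and this gives $\Rad(M) \ll M$. Concretely, if $\Rad(M) + L = M$ with $L \neq M$, choose a maximal $P \supseteq L$; then $\Rad(M) \leq P$ forces $M = P$, a contradiction. Since a submodule of a small submodule is small, $\Radg(M) \ll M$. The only delicate point is the inclusion $\Radg(M) \leq \Rad(M)$. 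If $\Radg$ is instead taken as the intersection of essential maximal submodules, the inclusion is reversed, and I would argue directly. Suppose $\Radg(M) + L = M$ with $L$ proper, and take a maximal $P \supseteq L$. If $P$ is essential, then $\Radg(M) \leq P$, a contradiction. If $P$ is not essential, then $P$ is a direct summand with a simple complement $S$, and I would need to handle $S \cap \Radg(M)$. This is the step I expect to be the main obstacle, since it depends on the precise definition of $\Radg$.

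\textbf{Step 2.} With $\Radg(M) \ll M$, take $H \leq M$. By hypothesis there is $J \tleq M$ with $H + J = M$ and $H \cap J \leq \Radg(J)$. By Theorem~\ref{thm:zosch1}, $H \cap J \leq J \cap \Radg(M) \ll M$, so $H \cap J \ll M$. Theorem~\ref{thm:zosch2} then gives $H \cap J \ll J$. Hence $J$ is a supplement of $H$, and it is a $t$-sum term of $M$. This shows $M$ is $t$-$g$-supplemented in the stated sense, and in particular supplemented by Theorem~\ref{thm:rad_small}.
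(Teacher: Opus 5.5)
Your proof has a genuine gap at Step~1, and the gap is not a matter of convention. With $\Radg(M)$ the intersection of the essential maximal submodules (equivalently, the sum of the $g$-small ones), the inclusion is $\Rad(M) \leq \Radg(M)$, not the reverse; the paper uses exactly this in the proof of Theorem~\ref{thm:hollow}.

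For finitely generated $M$, $\Radg(M)$ need not be small. A simple module $S$ has no essential maximal submodule, so $\Radg(S) = S \not\ll S$. The case you flagged as the obstacle cannot be handled. If $P$ is a non-essential maximal submodule, then $M = P \oplus S$ with $S$ simple. Every simple submodule lies in every essential submodule, so $S \leq \Radg(M)$, and hence $\Radg(M) + P = M$.

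Worse, under your reading of ``$t$-$g$-supplemented'' (ordinary supplements that are $t$-sum terms), the corollary is false. Let $R$ be the ring of eventually constant sequences in $\mathbb{F}_2$, and take $M = R$.
\begin{itemize}
\item The maximal ideals are the direct summands $\mathfrak{m}_i = \{x : x_i = 0\}$ together with the essential ideal $\mathfrak{m}_\infty = \bigoplus_{\mathbb{N}} \mathbb{F}_2$. Hence $\Radg(R) = \mathfrak{m}_\infty$.
\item An ideal either lies in $\mathfrak{m}_\infty$, in which case take $J = R$, or it contains an idempotent of cofinite support. In the latter case it is a direct summand, and you take $J$ to be a complement. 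So $R$ is cyclic and $t$-$g$-radical supplemented.
\item $R$ is Boolean, hence von Neumann regular, so every $R$-module has zero radical and small submodules are zero. Since $\mathfrak{m}_\infty$ is essential and proper, it is not a direct summand, so it has no supplement.
\end{itemize}

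The statement is correct only if ``$t$-$g$-supplemented'' means the following: every $N \leq M$ has a $g$-supplement $J \tleq M$, i.e.\ $N + J = M$ and $N \cap J \ll_g J$, where $\ll_g$ tests smallness only against essential submodules. The paper gives no proof, and this reading is presumably what is intended.

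Your maximal-submodule argument proves the right replacement for Step~1 once you require $L$ to be essential. Suppose $X$ is finitely generated and $\Radg(X) + L = X$ with $L$ essential and proper. A maximal $P \supseteq L$ is then essential, so $\Radg(X) \leq P$, a contradiction. Thus $\Radg(X) \ll_g X$.

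Apply this to $X = J$ rather than to $M$. A $t$-sum term $J$ with partner $K$ is finitely generated, because $J/(J \cap K) \cong M/K$ and $J \cap K \ll J$. Hence $N \cap J \leq \Radg(J) \ll_g J$. This route does not need Theorems~\ref{thm:zosch2} and~\ref{thm:rad_small}; both concern $\ll$, not $\ll_g$, and give nothing here.
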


\subsection{Closure Under $t$-Sums}

\begin{theorem}\label{thm:tsum_closure}
Let $M = M_1 \tsum M_2$ be a $t$-sum.
If $M_1$ and $M_2$ are both $t$-$g$-radical supplemented, then $M$ is
$t$-$g$-radical supplemented.
\end{theorem}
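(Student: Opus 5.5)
The plan is to follow the standard proof that a finite sum of (radical) supplemented modules is again supplemented. The only extra work is keeping track of the $t$-sum term condition. Throughout, $M = M_1 \tsum M_2$ means $M = M_1 + M_2$ with $M_1 \cap M_2 \ll M_1$ and $M_1 \cap M_2 \ll M_2$. We use two facts about $\Radg$: it is monotone, $\Radg(A) \leq \Radg(B)$ for $A \leq B$, and it is additive on sums, $\Radg(A) + \Radg(B) \leq \Radg(A+B)$. Both are routine for the generalized radical.

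Fix $N \leq M$. First consider $N + M_2$. Since $M_1 + (N+M_2) = M$, the submodule $(N + M_2)\cap M_1$ of $M_1$ has, by hypothesis on $M_1$, a $g$-radical supplement $J_1$ in $M_1$ that is a $t$-sum term of $M_1$. Thus $M_1 = ((N+M_2)\cap M_1) + J_1$ and $(N+M_2)\cap J_1 \leq \Radg(J_1)$. It follows that $M = N + M_2 + J_1$.

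Next apply the hypothesis on $M_2$ to the submodule $(N + J_1)\cap M_2$. This gives $J_2 \tleq M_2$ with $M_2 = ((N+J_1)\cap M_2) + J_2$ and $(N+J_1)\cap J_2 \leq \Radg(J_2)$. Then $M = N + J_1 + J_2$. For the intersection, the usual computation gives
\[
N \cap (J_1+J_2) \leq (N+J_2)\cap J_1 + (N+J_1)\cap J_2 \leq \Radg(J_1) + \Radg(J_2) \leq \Radg(J_1+J_2).
\]
The middle step needs $(N+J_2)\cap J_1 \leq \Radg(J_1)$, but we only know $(N+M_2)\cap J_1 \leq \Radg(J_1)$. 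This is fine, since $J_2 \leq M_2$ gives $(N+J_2)\cap J_1 \leq (N+M_2)\cap J_1$. The first inequality is the standard modular-law step: if $n = a+b$ with $a\in J_1$ and $b \in J_2$, then $a = n - b \in (N+J_2)\cap J_1$ and $b = n - a \in (N+J_1)\cap J_2$. So $J := J_1 + J_2$ is a $g$-radical supplement of $N$ in $M$.

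The main obstacle is showing that $J_1 + J_2$ is a $t$-sum term of $M$. I would proceed in two steps.
\begin{enumerate}
\item Show that a $t$-sum term of $M_i$ is a $t$-sum term of $M$. If $J_1 + K_1 = M_1$ with $J_1\cap K_1$ small in both $J_1$ and $K_1$, the candidate complement in $M$ is $K_1 + M_2$. We must check that $J_1 \cap (K_1+M_2)$ is small in $J_1$ and in $K_1+M_2$. This uses $M_1\cap M_2 \ll M_i$ together with Theorem~\ref{thm:zosch2}, which lets smallness be transferred between $M_i$, $M$ and the relevant $t$-sum terms.
\item Combine $J_1$ and $J_2$. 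If that transfer becomes too delicate, I would fall back on the SSP property: once $J_1$ and $J_2$ are $t$-sum terms of $M$, $J_1 + J_2$ is one as well, provided $M$ has SSP. Since the statement imposes no such hypothesis, I would first try the direct route with complement $K_1 + K_2$, where $K_i$ is the complement of $J_i$ in $M_i$. The required smallness of $(J_1+J_2)\cap(K_1+K_2)$ would be reduced, modulo $M_1 \cap M_2$, to the smallness of $J_i \cap K_i$.
\end{enumerate}
I expect this verification to be where the argument really lives. If it fails in general, SSP (or a direct sum) is the natural additional assumption.
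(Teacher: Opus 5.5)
Your construction of $J_1, J_2$ and the check that $J_1+J_2$ is a $g$-radical supplement of $N$ are correct. They match the paper's proof, with the roles of $M_1$ and $M_2$ interchanged.

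\textbf{The gap.} You never show that $J_1+J_2$ is a $t$-sum term of $M$, which is the step you yourself flag. This is the only point where the theorem goes beyond the classical one. Your step~1 does not close it: without SSP, knowing that $J_1$ and $J_2$ are separately $t$-sum terms of $M$ says nothing about $J_1+J_2$. What actually matters is that the complements $K_1, K_2$ lie in $M_1, M_2$, and that $M_1, M_2$ themselves form a $t$-sum. Your SSP or direct-sum fallback changes the hypotheses, so it would prove a different theorem. The paper does not prove this step either; it imports it as \cite[Proposition~4.1.7]{kosar2014}. You need either that citation or an argument.

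\textbf{How to close it.} Your direct route with complement $K_1+K_2$ does work, and small epimorphisms give a clean proof.
\begin{enumerate}
\item Submodules $A,B\leq M$ form a $t$-sum iff the addition map $A\oplus B\to M$ is a small epimorphism. Its kernel $\{(x,-x) : x\in A\cap B\}$ lies in $(A\cap B)\oplus(A\cap B)$. Its images under the two projections are both $A\cap B$.
\item The composite $J_1\oplus K_1\oplus J_2\oplus K_2\to M_1\oplus M_2\to M$ is a direct sum of small epimorphisms followed by a small epimorphism. Hence it is a small epimorphism.
\item After reordering summands, this composite equals $g\circ h$. Here $h\colon (J_1\oplus J_2)\oplus(K_1\oplus K_2)\to (J_1+J_2)\oplus(K_1+K_2)$ is the obvious surjection, and $g$ is the addition map to $M$.
\item Then $\ker g=h(\ker(g\circ h))$ is small, being the image of a small submodule. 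So $g$ is a small epimorphism, and $J_1+J_2$ and $K_1+K_2$ form a $t$-sum in $M$.
\end{enumerate}
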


\begin{proof}
Let $W \leq M$.
Since $(M_1 + W) \cap M_2 \leq M_2$ and $M_2$ is $t$-$g$-radical
supplemented, there exists $S \tleq M_2$ with
$(M_1 + W) \cap M_2 + S = M_2$ and
$(M_1 + W) \cap M_2 \cap S \leq \Radg(S)$.
From $M = M_1 + M_2$, we obtain $M = W + M_1 + S$.
Since $M_1 \cap (W + S) \leq M_1$ and $M_1$ is $t$-$g$-radical
supplemented, there exists $T \tleq M_1$ with
$M_1 = M_1 \cap (W + S) + T$ and $(W + S) \cap T \leq \Radg(T)$.
Then $M = W + T + S$.
Moreover,
\[
  W \cap (T + S)
    \leq T \cap (W + S) + S \cap (W + M_1)
    \leq \Radg(T) + \Radg(S)
    \leq \Radg(T + S).
\]
By \cite[Proposition~4.1.7]{kosar2014}, $T + S$ is a $t$-sum term of $M$,
completing the proof.
\end{proof}

\begin{corollary}\label{cor:finite_tsum}
Let $M = M_1 \tsum \cdots \tsum M_n$ be a finite $t$-sum.
If each $M_i$ is $t$-$g$-radical supplemented, then $M$ is
$t$-$g$-radical supplemented.
\end{corollary}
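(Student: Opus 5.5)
The plan is to argue by induction on $n$, with Theorem~\ref{thm:tsum_closure} as the inductive step. For $n=1$ there is nothing to prove. For $n=2$ the statement is exactly Theorem~\ref{thm:tsum_closure}. For $n\ge 3$, write $M' = M_1 + \cdots + M_{n-1}$, so that $M = M' + M_n$.

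The central step is to show that $M$ is the $t$-sum $M' \tsum M_n$, that is, $M'\cap M_n \ll M'$ and $M'\cap M_n \ll M_n$. I would take this from the meaning of the iterated $t$-sum notation $M_1 \tsum \cdots \tsum M_n$. The natural reading, mirroring iterated internal direct sums, is that the sum is built stepwise: $(M_1 \tsum \cdots \tsum M_{n-1}) \tsum M_n$, with each partial sum itself a $t$-sum. Under this reading, $M'$ is a $t$-sum $M_1 \tsum \cdots \tsum M_{n-1}$ and $M = M' \tsum M_n$ hold by definition.

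If the intended definition is instead the symmetric one, where each $M_i$ is a $t$-sum term with complement $\sum_{j\ne i} M_j$, I would reduce to the stepwise form as follows. First, \cite[Proposition~4.1.7]{kosar2014}, already used in the proof of Theorem~\ref{thm:tsum_closure}, shows that a sum of $t$-sum terms is again a $t$-sum term. This applies to $M'$. Second, I would verify the smallness of $M'\cap M_n$ in both $M'$ and $M_n$ using Theorem~\ref{thm:zosch2}: a submodule of a $t$-sum term is small in the term iff it is small in $M$. So it suffices to show $M'\cap M_n \ll M$, and this follows from $M'\cap M_n \le M_n \cap \sum_{j\ne n}M_j \ll M_n$, which is small in $M$ by Theorem~\ref{thm:zosch2}.

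Once $M = M' \tsum M_n$ is established, and $M' = M_1 \tsum \cdots \tsum M_{n-1}$ is a $t$-sum of $n-1$ terms, the inductive hypothesis says $M'$ is $t$-$g$-radical supplemented. Since $M_n$ is $t$-$g$-radical supplemented by assumption, Theorem~\ref{thm:tsum_closure} applied to $M = M' \tsum M_n$ gives the conclusion.

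The main obstacle is the step just described: confirming that the partial sum $M'$ is again a $t$-sum of the first $n-1$ modules and is a $t$-sum partner of $M_n$. This is where the precise meaning of the iterated $t$-sum enters. If the smallness has to be transferred between $M'$ and $M$, the tool I would rely on first is Theorem~\ref{thm:zosch2}.
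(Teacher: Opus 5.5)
Your overall strategy is the intended one. The paper gives no separate proof and treats the corollary as the $n$-fold iterate of Theorem~\ref{thm:tsum_closure}. Under the stepwise reading $M=(M_1\tsum\cdots\tsum M_{n-1})\tsum M_n$ your induction is complete.

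Your fallback for the symmetric reading does not hold up as written, for two reasons.

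\textbf{A false general claim.} You cite Proposition~4.1.7 as saying that a sum of $t$-sum terms is again a $t$-sum term. That is false in general; it is exactly the SSP property, which the paper imposes as an extra hypothesis. For instance, $\mathbb{Z}\oplus\mathbb{Z}$ has no nonzero small submodules, so its $t$-sum terms are just its direct summands. But $\mathbb{Z}(1,0)+\mathbb{Z}(1,2)=\mathbb{Z}\oplus 2\mathbb{Z}$ is not a summand. As used in the proof of Theorem~\ref{thm:tsum_closure}, that proposition only concerns $T+S$ with $T\tleq M_1$ and $S\tleq M_2$ inside a $t$-sum $M=M_1\tsum M_2$.

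The conclusion you wanted from it happens to be true here, for a trivial reason. Under your two-sided symmetric definition, the condition for $i=n$ literally says $M=M'\tsum M_n$. So the detour through that proposition and Theorem~\ref{thm:zosch2} is unnecessary.

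\textbf{An omitted step.} You yourself call the following the main obstacle: showing that $M_1,\dots,M_{n-1}$ form a $t$-sum of $M'$. In the symmetric case you never prove it, and without it the inductive hypothesis cannot be applied to $M'$.

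This is easy to supply by an elementwise argument. Put $S_j=M_j\cap\sum_{k\neq j}M_k$, so $S_j\ll M_j$ by hypothesis. Fix $I\subseteq\{1,\dots,n\}$, $i\in I$, and set $A=\sum_{j\in I\setminus\{i\}}M_j$. Any $x\in M_i\cap A$ can be written $x=\sum_{j\in I\setminus\{i\}}m_j$ with $m_j\in M_j$. Then each $m_j=x-\sum_{k\in I\setminus\{i,j\}}m_k$ lies in $S_j$. Hence $M_i\cap A\le\sum_{j\in I\setminus\{i\}}S_j$. This sum is small in $A$, because each $S_j\ll M_j\le A$ and finite sums of small submodules are small. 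Also $M_i\cap A\le S_i\ll M_i$. Taking $I=\{1,\dots,n-1\}$ gives the symmetric $t$-sum structure on $M'$, and then your induction goes through unchanged.
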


\subsection{Quotient Modules and Homomorphic Images}

\begin{theorem}\label{thm:quotient1}
Let $M$ be $t$-$g$-radical supplemented and $H \leq M$.
Suppose that for every $t$-sum term $T$ of $M$, the submodule
$(T + H)/H$ is a $t$-sum term of $M/H$.
Then $M/H$ is $t$-$g$-radical supplemented.
\end{theorem}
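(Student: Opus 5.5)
Every submodule of $M/H$ has the form $N/H$ with $H \leq N \leq M$, so we fix such an $N$. Since $M$ is $t$-$g$-radical supplemented, we choose $J \tleq M$ with $N + J = M$ and $N \cap J \leq \Radg(J)$. The candidate supplement of $N/H$ in $M/H$ is $(J+H)/H$. By hypothesis this is a $t$-sum term of $M/H$, because $J$ is a $t$-sum term of $M$. It then remains to check the two supplement conditions for $(J+H)/H$.

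The sum condition is immediate: $N/H + (J+H)/H = (N+J)/H = M/H$. For the intersection, we use $H \leq N$ and the modular law to get $N \cap (J+H) = (N\cap J) + H$. Hence
\[
  \frac{N}{H} \cap \frac{J+H}{H} = \frac{(N \cap J) + H}{H}.
\]
So we need $((N\cap J)+H)/H \leq \Radg((J+H)/H)$. Since $N \cap J \leq \Radg(J)$, it suffices to show that $(\Radg(J)+H)/H \leq \Radg((J+H)/H)$.

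This reduces to the standard fact that the generalized radical is preserved under homomorphisms: for any $f\colon A \to B$ we have $f(\Radg(A)) \leq \Radg(B)$. We apply it to the natural epimorphism $J \to (J+H)/H$, $x \mapsto x+H$. The image of $\Radg(J)$ is $(\Radg(J)+H)/H$, which therefore lies in $\Radg((J+H)/H)$, as required.

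The main obstacle is justifying $f(\Radg(A)) \leq \Radg(B)$, since the paper has not yet recorded any properties of $\Radg$. If $\Radg$ is the intersection of the essential maximal submodules, I would argue as follows. Let $L$ be an essential maximal submodule of $B$. If $f(A) \leq L$, the image of $\Radg(A)$ trivially lies in $L$. Otherwise $A/f^{-1}(L) \cong (f(A)+L)/L = B/L$ is simple, so $f^{-1}(L)$ is maximal in $A$. We also need $f^{-1}(L)$ to be essential in $A$, which holds because preimages of essential submodules are essential. Hence $\Radg(A) \leq f^{-1}(L)$. Intersecting over all such $L$ gives the inclusion. Alternatively, I would just cite this as a known property of $g$-radicals from the literature.
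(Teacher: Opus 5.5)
Your proof is correct and follows the same route as the paper: take a $g$-radical supplement $J \tleq M$ of $N$, pass to $(J+H)/H$, and use the hypothesis to get the $t$-sum-term property. The only difference is that the paper cites \cite[Lemma~8]{kosar2019} for the fact that $(J+H)/H$ is a $g$-radical supplement of $N/H$, whereas you prove it directly from the modular law and $f(\Radg(A)) \leq \Radg(B)$, and your essential-maximal-submodule argument for that inclusion is valid.
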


\begin{proof}
Let $W/H \leq M/H$.
Since $M$ is $t$-$g$-radical supplemented and $W \leq M$, there exists
$S \tleq M$ that is a $g$-radical supplement of $W$ in $M$.
By \cite[Lemma~8]{kosar2019}, since $H \leq W$, the submodule
$(S + H)/H$ is a $g$-radical supplement of $W/H$ in $M/H$.
By hypothesis $(S + H)/H$ is also a $t$-sum term of $M/H$, so $M/H$ is
$t$-$g$-radical supplemented.
\end{proof}

\begin{theorem}\label{thm:quotient_dist}
If $M$ is distributive and $t$-$g$-radical supplemented, then every
quotient module $M/H$ is $t$-$g$-radical supplemented.
\end{theorem}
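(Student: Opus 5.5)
The plan is to reduce the statement to Theorem~\ref{thm:quotient1}. Given a distributive $t$-$g$-radical supplemented module $M$ and $H \leq M$, it suffices to show that for every $t$-sum term $T$ of $M$, the submodule $(T+H)/H$ is a $t$-sum term of $M/H$. Theorem~\ref{thm:quotient1} then gives the conclusion directly.

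So fix $T \tleq M$ and choose $K \leq M$ with $T + K = M$, $T \cap K \ll T$ and $T \cap K \ll K$. The natural candidate partner for $(T+H)/H$ is $(K+H)/H$. Clearly $(T+H)/H + (K+H)/H = M/H$. The intersection is computed using distributivity, which is exactly where the hypothesis enters:
\[
  (T+H) \cap (K+H) = (T \cap K) + H .
\]
This follows from the distributive law, since $(T+H)\cap(K+H) = (T \cap (K+H)) + H$ and $T\cap(K+H) = (T\cap K) + (T\cap H)$. Hence
\[
  \frac{T+H}{H} \cap \frac{K+H}{H} = \frac{(T\cap K)+H}{H},
\]
which is the image of $T \cap K$ under the natural epimorphism $\pi \colon M \to M/H$.

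It remains to check smallness in each side. Since $T \cap K \ll T$, its image under the restricted epimorphism $\pi|_T \colon T \to (T+H)/H$ is small in $(T+H)/H$, because homomorphic images of small submodules are small. Thus $((T\cap K)+H)/H \ll (T+H)/H$, and symmetrically $((T\cap K)+H)/H \ll (K+H)/H$ using $T\cap K \ll K$. Therefore $(T+H)/H$ is a $t$-sum term of $M/H$ with partner $(K+H)/H$, and Theorem~\ref{thm:quotient1} applies.

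The main obstacle is the intersection identity, which fails without distributivity. Care is needed to apply the distributive law in the form $A\cap(B+C) = (A\cap B)+(A\cap C)$ twice, together with modularity ($H \leq T+H$), to get exactly $(T\cap K)+H$. Everything else is the standard fact that small submodules map to small submodules under homomorphisms.
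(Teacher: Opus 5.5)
Your proposal is correct and follows the paper's proof essentially step for step. Like the paper, you reduce to Theorem~\ref{thm:quotient1}, take $(K+H)/H$ as the partner of $(T+H)/H$, use distributivity to get $(T+H)\cap(K+H) = (T\cap K)+H$, and push both smallness conditions through the natural epimorphism (via $\pi|_T$ and $\pi|_K$). Your derivation of the intersection identity from the modular law plus one application of distributivity, and your explicit appeal to images of small submodules, just fill in details the paper leaves implicit.
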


\begin{proof}
Let $P$ be a $t$-sum term of $M$ with complement $S$: $M = P + S$ and
$P \cap S \ll P$, $S$.
Distributivity gives
\[
  \frac{P + H}{H} \cap \frac{S + H}{H} = \frac{P \cap S + H}{H}.
\]
Since $P \cap S \ll P$ and $P \cap S \ll S$, passing to the quotient,
$(P \cap S + H)/H \ll (P + H)/H$ and similarly for $S$.
Thus $(P + H)/H$ and $(S + H)/H$ form a $t$-sum in $M/H$.
Theorem~\ref{thm:quotient1} applies, so $M/H$ is $t$-$g$-radical
supplemented.
\end{proof}

\begin{corollary}\label{cor:hom_image}
If $M$ is distributive and $t$-$g$-radical supplemented, then every
homomorphic image of $M$ is $t$-$g$-radical supplemented.
\end{corollary}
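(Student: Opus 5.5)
The plan is to reduce Corollary~\ref{cor:hom_image} to Theorem~\ref{thm:quotient_dist} through the first isomorphism theorem. A homomorphic image of $M$ is the image $f(M)$ of some $R$-homomorphism $f\colon M \to N$. With $H = \ker f$, the induced map $\bar f\colon M/H \to f(M)$ is an isomorphism. Theorem~\ref{thm:quotient_dist} already shows that $M/H$ is $t$-$g$-radical supplemented, because $M$ is distributive and $t$-$g$-radical supplemented. What remains is to show that being $t$-$g$-radical supplemented is invariant under isomorphism.

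For isomorphism invariance, let $\varphi\colon X \to Y$ be an isomorphism with $X$ $t$-$g$-radical supplemented, and let $N \leq Y$. Apply the hypothesis to $\varphi^{-1}(N) \leq X$ to get $J \tleq X$ with $\varphi^{-1}(N) + J = X$ and $\varphi^{-1}(N)\cap J \leq \Radg(J)$. Then $\varphi(J)$ is the candidate supplement for $N$, and three facts need checking.

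\begin{enumerate}
\item $N + \varphi(J) = Y$. This follows because $\varphi$ is surjective and preserves sums.
\item $\varphi(J) \tleq Y$. If $K$ is a $t$-complement of $J$, then $\varphi(K)$ is one for $\varphi(J)$: $\varphi$ is injective, so it preserves intersections, and an isomorphism $J \to \varphi(J)$ carries small submodules of $J$ to small submodules of $\varphi(J)$.
\item $N\cap\varphi(J) \leq \Radg(\varphi(J))$. We have $N\cap\varphi(J) = \varphi(\varphi^{-1}(N)\cap J) \leq \varphi(\Radg(J))$, and $\varphi(\Radg(J)) = \Radg(\varphi(J))$.
\end{enumerate}

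The only step with real content is the last equality in item 3: the generalized radical commutes with isomorphisms. It holds because $\Radg$ is defined intrinsically, as the intersection of the essential maximal submodules. The restriction $\varphi|_J\colon J \to \varphi(J)$ is an isomorphism, so it gives a bijection between the essential maximal submodules of $J$ and those of $\varphi(J)$, and it carries their intersection to the corresponding intersection. This is routine but worth recording. Once it is in place, $f(M)\cong M/H$ is $t$-$g$-radical supplemented, which proves the corollary.
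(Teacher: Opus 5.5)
Your proposal is correct and takes the paper's intended route. The paper gives no separate proof of the corollary; it is stated as an immediate consequence of Theorem~\ref{thm:quotient_dist} via $f(M)\cong M/\ker f$, and your explicit isomorphism-invariance check (including $\varphi(\Radg(J))=\Radg(\varphi(J))$) supplies the routine detail the paper leaves implicit.
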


\begin{theorem}\label{thm:quotient2}
Let $M$ be $t$-$g$-radical supplemented and $H \leq M$.
Suppose that whenever $M_1, M_2$ are $t$-sum terms of $M$, the submodule
$H$ is the $t$-sum of $H \cap M_1$ and $H \cap M_2$.
Then $M/H$ is $t$-$g$-radical supplemented.
\end{theorem}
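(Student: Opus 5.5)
The plan is to reduce Theorem~\ref{thm:quotient2} to Theorem~\ref{thm:quotient1}. The $g$-radical supplement part is already handled there via \cite[Lemma~8]{kosar2019}, so everything comes down to one claim: for every $t$-sum term $T$ of $M$, the image $(T+H)/H$ is a $t$-sum term of $M/H$. Concretely, fix $W/H \leq M/H$ and take $S \tleq M$ with $W + S = M$ and $W \cap S \leq \Radg(S)$. Then $(S+H)/H$ is a $g$-radical supplement of $W/H$, and it remains to show that $(S+H)/H$ is a $t$-sum term of $M/H$.

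Let $S$ have $t$-sum partner $K$, so that $M = S + K$ with $S \cap K \ll S$ and $S \cap K \ll K$. Note that $K$ is itself a $t$-sum term of $M$, with partner $S$. Setting $M_1 = S$ and $M_2 = K$, the hypothesis gives
\[
H = (H \cap S) + (H \cap K), \qquad (H\cap S)\cap(H\cap K) \ll H\cap S,\ H \cap K.
\]
The natural candidate partner for $(S+H)/H$ is $(K+H)/H$. Their sum is clearly $M/H$.

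The main step is computing the intersection. I will use the decomposition of $H$ to prove the modular identity
\[
(S+H)\cap(K+H) = (S\cap K) + H.
\]
Since $H \leq S + (H\cap K)$, we get $S + H = S + (H\cap K)$, and similarly $K + H = K + (H\cap S)$. Now take $x = s + h_2 = k + h_1$ with $s\in S$, $h_2 \in H\cap K$, $k\in K$, $h_1\in H\cap S$. Then $s - h_1 = k - h_2 \in S\cap K$, so $x = (s-h_1) + h_1 + h_2 \in (S\cap K) + H$. The reverse inclusion is obvious.

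Hence
\[
\frac{S+H}{H}\cap\frac{K+H}{H} = \frac{(S\cap K)+H}{H},
\]
which is the image of $S\cap K$ under the projection $\pi : M \to M/H$. Since $S\cap K \ll S$, its image under $\pi|_S$ is small in $\pi(S) = (S+H)/H$, because homomorphic images of small submodules are small. Likewise it is small in $(K+H)/H$. So $(S+H)/H$ is a $t$-sum term of $M/H$, and the argument finishes exactly as in Theorem~\ref{thm:quotient1}.

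The obstacle I expect is the intersection identity. It is the only place the hypothesis on $H$ enters, and only the equality $H = (H\cap S) + (H\cap K)$ is used; the smallness part of the $t$-sum condition on $H$ is not needed. The one point to check is that $K$ is a legitimate choice of $M_2$, which holds by the symmetry of the definition of a $t$-sum term.
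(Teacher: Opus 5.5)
Your proof is correct. The paper states Theorem~\ref{thm:quotient2} without proof, so there is no argument of the authors to match line by line. Your route is, however, exactly the template the paper uses for Theorem~\ref{thm:quotient_dist}: verify the hypothesis of Theorem~\ref{thm:quotient1} by showing that a $t$-sum $M = S + K$ descends to the $t$-sum $M/H = (S+H)/H + (K+H)/H$. That uses the identity $(S+H)\cap(K+H) = (S\cap K)+H$ together with the fact that homomorphic images of small submodules are small. The paper obtains that identity from distributivity; you derive it from $H = (H\cap S)+(H\cap K)$, and your element chase for it is sound.

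Your version buys two things. First, as you observe, only the equality $H = (H\cap M_1)+(H\cap M_2)$ for $t$-sums $M = M_1 \tsum M_2$ is used, never the smallness clause. Every submodule of a distributive module satisfies this weaker condition, so your argument also recovers Theorem~\ref{thm:quotient_dist} as a special case. Second, your choice $M_1 = S$, $M_2 = K$ is admissible under the reading of Corollary~\ref{cor:epi}, where $M_1, M_2$ must form a $t$-sum of $M$. That is the reading to adopt. Taken literally (any two $t$-sum terms), the hypothesis could be applied with $M_1 = M_2 = 0$, since $0$ is a $t$-sum term with partner $M$, and this would force $H = 0$. Your proof is valid under either reading.
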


\begin{corollary}\label{cor:epi}
Let $f\colon M \to N$ be an epimorphism of $R$-modules.
If $M$ is $t$-$g$-radical supplemented and whenever $M_1, M_2$ form a
$t$-sum of $M$,
$\ker(f) = (\ker(f) \cap M_1) \tsum (\ker(f) \cap M_2)$,
then $N$ is $t$-$g$-radical supplemented.
\end{corollary}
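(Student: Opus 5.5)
The plan is to reduce Corollary~\ref{cor:epi} to Theorem~\ref{thm:quotient2} through the first isomorphism theorem. Put $H = \ker(f)$. Since $f$ is an epimorphism, it induces an isomorphism $\bar f\colon M/H \to N$ given by $m + H \mapsto f(m)$.

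First I will check that the hypothesis of the corollary is exactly the hypothesis of Theorem~\ref{thm:quotient2} with this choice of $H$. The corollary asks that, whenever $M_1, M_2$ form a $t$-sum of $M$, the submodule $\ker(f)$ is the $t$-sum of $\ker(f)\cap M_1$ and $\ker(f)\cap M_2$. Theorem~\ref{thm:quotient2} speaks of arbitrary $t$-sum terms $M_1, M_2$, while the corollary speaks of pairs forming a $t$-sum of $M$. So I will follow the usage in the proof of Theorem~\ref{thm:quotient2}, where the condition is applied to a $t$-sum term together with a witnessing complement. This is the only point that needs care, and I expect it to be routine bookkeeping rather than a real obstacle. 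Granting this, Theorem~\ref{thm:quotient2} shows that $M/H$ is $t$-$g$-radical supplemented.

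Next I will transport the property along $\bar f$. Being $t$-$g$-radical supplemented is stated entirely in terms of the submodule lattice: sums, intersections, the small relation $\ll$ (which defines $t$-sum terms), and $\Radg$. An isomorphism preserves all of these, so in particular $\bar f(\Radg(J)) = \Radg(\bar f(J))$, because $\Radg$ is defined intrinsically from the module.

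Concretely, let $N' \leq N$. Then $\bar f^{-1}(N') \leq M/H$, so it has a $g$-radical supplement $J$ that is a $t$-sum term of $M/H$, with complement $K$. Applying $\bar f$ gives $N' + \bar f(J) = N$ and $N' \cap \bar f(J) \leq \Radg(\bar f(J))$. It also gives $\bar f(J) + \bar f(K) = N$, and $\bar f(J)\cap \bar f(K) = \bar f(J\cap K)$ is small in both $\bar f(J)$ and $\bar f(K)$. Hence $N$ is $t$-$g$-radical supplemented.
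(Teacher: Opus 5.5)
Your proposal is correct and follows the paper's intended route: set $H=\ker(f)$, apply Theorem~\ref{thm:quotient2} to get that $M/H$ is $t$-$g$-radical supplemented, and transport the property along $M/\ker(f)\cong N$. The paper gives no written proof of either the theorem or this corollary. The hypothesis mismatch you flag is indeed harmless. Any proof of Theorem~\ref{thm:quotient2} needs the condition only for a $t$-sum term $S$ (the $g$-radical supplement) together with a complement $K$. Writing $H=(H\cap S)+(H\cap K)$, the modular law gives $\frac{S+H}{H}\cap\frac{K+H}{H}=\frac{(S\cap K)+H}{H}$, which is small in both factors, so $(S+H)/H$ is a $t$-sum term of $M/H$.
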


\begin{theorem}[SSP case]\label{thm:SSP}
If $M$ has SSP and is $t$-$g$-radical supplemented, then for every
$t$-sum term $H \leq M$, the quotient $M/H$ is $t$-$g$-radical
supplemented.
\end{theorem}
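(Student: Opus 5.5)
The plan is to reduce the statement to Theorem~\ref{thm:quotient1}. It suffices to show that for every $t$-sum term $T$ of $M$, the image $(T+H)/H$ is a $t$-sum term of $M/H$. The $g$-radical supplement part then comes for free from \cite[Lemma~8]{kosar2019}, exactly as in the proof of Theorem~\ref{thm:quotient1}.

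\emph{Step 1: use SSP.} Let $T \tleq M$. Since $H$ is also a $t$-sum term and $M$ has SSP, $T+H$ is a $t$-sum term of $M$. So there is $K \leq M$ with
\[
M = (T+H)+K, \qquad (T+H)\cap K \ll T+H, \qquad (T+H)\cap K \ll K .
\]

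\emph{Step 2: candidate complement.} I will take $(K+H)/H$ as the complement of $(T+H)/H$ in $M/H$. Clearly $(T+H)/H + (K+H)/H = M/H$. The intersection is $\bigl((T+H)\cap(K+H)\bigr)/H$. By the modular law, since $H \leq T+H$,
\[
(T+H)\cap(K+H) = H + (T+H)\cap K .
\]
Hence the intersection in $M/H$ equals $\bigl((T+H)\cap K + H\bigr)/H$. This is the image of $L := (T+H)\cap K$ under the canonical epimorphism $M \to M/H$.

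\emph{Step 3: smallness.} The images of small submodules under epimorphisms are small. Applying this to $T+H \to (T+H)/H$ with $L \ll T+H$ gives $(L+H)/H \ll (T+H)/H$.

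For the other side, consider the epimorphism $K \to (K+H)/H$, $k \mapsto k+H$. It sends $L$ onto $(L+H)/H$, because $L \leq K$. Since $L \ll K$, we get $(L+H)/H \ll (K+H)/H$.

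So $(T+H)/H \tleq M/H$, and Theorem~\ref{thm:quotient1} finishes the proof.

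\emph{Main obstacle.} The delicate point is Step 2. Naively pushing forward the original complement of $T$ (rather than of $T+H$) gives an intersection $(T\cap K' + H)/H$ only under distributivity, as in Theorem~\ref{thm:quotient_dist}. The role of SSP is precisely to let us absorb $H$ into the $t$-sum term first, so that the modular law applies without any distributivity hypothesis.

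I will also double-check that "small" is used in the standard sense, so that the epimorphic-image property in Step 3 is legitimate.
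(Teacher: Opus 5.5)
Your proposal is correct and follows essentially the same route as the paper: SSP absorbs $H$ into the $t$-sum term $T+H$, and one then passes to $M/H$. The paper does this only for the particular supplement $Q$ of $W$ and applies \cite[Lemma~8]{kosar2019} directly instead of citing Theorem~\ref{thm:quotient1}; your modular-law identity $(T+H)\cap(K+H) = H + (T+H)\cap K$ and the image-of-small-submodules argument spell out what the paper compresses into ``passing to $M/H$''.
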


\begin{proof}
Let $W/H \leq M/H$.
There exists $Q \tleq M$ that is a $g$-radical supplement of $W$ in $M$.
By SSP, $Q + H$ is a $t$-sum term of $M$.
By definition of $t$-sum term of $Q + H$, there exists $J \leq M$ with
$M = Q + H + J$ and $(Q + H) \cap J \ll Q + H$ and $\ll J$.
Passing to $M/H$, the submodule $(Q + H)/H$ is a $g$-radical supplement
of $W/H$ and a $t$-sum term of $M/H$, so $M/H$ is $t$-$g$-radical
supplemented.
\end{proof}

\subsection{Inheritance by $t$-Sum Terms}

\begin{lemma}\label{lem:inherit}
Let $M$ be $t$-$g$-radical supplemented, and suppose that the intersection
of any two $t$-sum terms of $M$ is again a $t$-sum term.
Then every $t$-sum term $P \leq M$ is $t$-$g$-radical supplemented.
\end{lemma}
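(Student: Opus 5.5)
Let $P$ be a $t$-sum term of $M$, say $M = P + K$ with $P \cap K \ll P$ and $P \cap K \ll K$, and let $N \leq P$. I would take a $g$-radical supplement of a suitable submodule of $M$ that is a $t$-sum term of $M$, intersect it with $P$, and use the intersection hypothesis to make it a $t$-sum term. Concretely, apply the $t$-$g$-radical supplemented property of $M$ to the submodule $N + K$. This gives $J \tleq M$ with $N + K + J = M$ and $(N+K) \cap J \leq \Radg(J)$.

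\textbf{Step 1: make the supplement lie in $P$.} The candidate is $J' = P \cap J$. By the hypothesis on intersections, $J'$ is a $t$-sum term of $M$. Since $J' \leq P$ and $P$ is a $t$-sum term, I would then argue that $J'$ is a $t$-sum term of $P$. Given $M = J' + L$ with small intersection, the natural witness in $P$ is $P \cap (L + K)$ or $P \cap L$. To pass smallness between $M$ and $P$ I would use Theorem~\ref{thm:zosch2}, applied to $J'$ and to $P$ as $t$-sum terms of $M$.

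\textbf{Step 2: $N + J' = P$.} From $M = N + K + J$ and modularity I would try to show $P = N + P \cap (K+J)$. The key problem is to replace $P \cap (K+J)$ by $P \cap J$ up to something inside $N$. I expect this step to be the main obstacle, because modularity does not give $P \cap (K + J) = P\cap K + P\cap J$ in general. What I would try first is to absorb the error using $P\cap K \ll P$. One writes $P = N + P\cap J + (\text{contribution of } P\cap K)$ and uses smallness of $P\cap K$ in $P$ to discard it, which needs a sum of the form $N + (P\cap J) + (P\cap K) = P$. If $(N + J)\cap P$ does not cooperate, I would instead apply the property of $M$ to $N + K$ with $K$ replaced by a $t$-complement chosen so that $J$ can be taken inside $P$. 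Another option is to assume, as in the setting of Theorem~\ref{thm:tsum_closure}, that the intersection behaves distributively on $t$-sum terms, which is close in spirit to the lemma's hypothesis.

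\textbf{Step 3: the radical condition.} We have $N \cap J' \leq (N+K)\cap J \leq \Radg(J)$. By Theorem~\ref{thm:zosch1} applied to the $t$-sum terms $J$ and $J'$ of $M$,
\[
N\cap J' \leq J' \cap \Radg(J) = J' \cap J \cap \Radg(M) = J' \cap \Radg(M) = \Radg(J').
\]
So $J'$ is a $g$-radical supplement of $N$ in $P$ that is a $t$-sum term of $P$, which proves that $P$ is $t$-$g$-radical supplemented.
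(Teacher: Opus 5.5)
Step~2 is a genuine gap, and it comes from your opening choice to apply the $t$-$g$-radical supplemented property of $M$ to $N+K$ instead of to $N$. From $M=N+K+J$ you cannot conclude $P=N+(P\cap J)$. For the $J$ that the hypothesis may give you, this equality is actually false, not just hard to prove.

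Take $M=\mathbb{Z}/2\mathbb{Z}\oplus\mathbb{Z}/2\mathbb{Z}$. It satisfies every hypothesis of the lemma: it is semisimple, so every submodule is a direct summand and hence a $t$-sum term, and it is $t$-$g$-radical supplemented by Theorem~\ref{thm:semisimple}. Let $P=\mathbb{Z}/2\mathbb{Z}\oplus 0$, $K=0\oplus\mathbb{Z}/2\mathbb{Z}$ and $N=0$. The diagonal $J=\{(0,0),(1,1)\}$ is a direct summand with $N+K+J=M$ and $(N+K)\cap J=0$, so it is an admissible output of the property. But $J'=P\cap J=0$, so $N+J'=0\neq P$.

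Your proposed repairs do not help. The absorption idea via $P\cap K\ll P$ fails in the same example, since $N+(P\cap J)+(P\cap K)=0$. Assuming distributivity adds a hypothesis the lemma does not have.

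The missing idea is simply to apply the property to $N$ itself, as the paper does. Choose $J\tleq M$ with $N+J=M$ and $N\cap J\leq\Radg(J)$. Since $N\leq P$, the modular law gives $P=P\cap(N+J)=N+(P\cap J)$ immediately. The intersection hypothesis makes $P\cap J$ a $t$-sum term of $M$. Your Step~3 computation, now starting from $N\cap(P\cap J)\leq N\cap J\leq\Radg(J)$, gives $N\cap(P\cap J)\leq\Radg(P\cap J)$ via Theorem~\ref{thm:zosch1}. That is exactly the paper's proof, and $K$ plays no role in it.

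Step~1 is only asserted in the paper, and your sketch of it needs sharpening. With $M=J'+L$ a $t$-sum, use the witness $P\cap L$:
\begin{itemize}
\item Modularity gives $P=J'+(P\cap L)$.
\item $J'\cap(P\cap L)=J'\cap L\ll J'$.
\item For $J'\cap L\ll P\cap L$: $P\cap L$ is itself a $t$-sum term of $M$ by the intersection hypothesis, and $J'\cap L\ll M$. So Theorem~\ref{thm:zosch2} applied to $V=P\cap L$ gives the required smallness.
\end{itemize}
Applying Theorem~\ref{thm:zosch2} to $J'$ and $P$, as you suggest, only yields smallness in $J'$ or in $P$, not in $P\cap L$.
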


\begin{proof}
Let $W \leq P \leq M$.
Since $M$ is $t$-$g$-radical supplemented, there exists $S \tleq M$ that
is a $g$-radical supplement of $W$: $M = W + S$, $W \cap S \leq \Radg(S)$.
By hypothesis, $P \cap S$ is a $t$-sum term of $M$.
By the Modular Law, $P = W + (P \cap S)$.
Moreover $W \cap (P \cap S) \leq S$, so
\[
  W \cap (P \cap S)
    \leq P \cap S \cap \Radg(S)
    = \Radg(P \cap S)
\]
by Theorem~\ref{thm:zosch1}.
Hence $P \cap S$ is a $g$-radical supplement of $W$ in $P$ that is a
$t$-sum term of $M$ (and of $P$), so $P$ is $t$-$g$-radical supplemented.
\end{proof}

% -----------------------------------------------------------
\section{Examples and Classification}
% -----------------------------------------------------------

\subsection{Classes That Are $t$-$g$-Radical Supplemented}

\begin{theorem}\label{thm:hollow}
Every hollow module is $t$-$g$-radical supplemented.
\end{theorem}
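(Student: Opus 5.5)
Fix a hollow module $M$ and an arbitrary submodule $N \leq M$. We split into two cases according to whether $N = M$ or $N$ is proper, and in each case exhibit a $g$-radical supplement of $N$ that is also a $t$-sum term of $M$.

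If $N = M$, take $J = 0$. Then $N + J = M$ and $N \cap J = 0 \leq \Radg(0)$. Moreover $0$ is a $t$-sum term of $M$, witnessed by $K = M$: we have $0 + M = M$, and $0 \cap M = 0$ is small in both $0$ and $M$.

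If $N \neq M$, then $N \ll M$ because $M$ is hollow. Take $J = M$. Then $N + M = M$, and $M$ is trivially a $t$-sum term of $M$, witnessed by $K = 0$, since $M \cap 0 = 0 \ll M$ and $0 \ll 0$. It remains to check $N \cap M = N \leq \Radg(M)$. Here we use the definition of the generalized radical as the sum of all $e$-small (equivalently, $g$-small) submodules of $M$. Every small submodule is $g$-small: if $N + L = M$ with $L$ essential, then $N \ll M$ forces $L = M$. Hence $N \leq \Radg(M)$, and $M$ is a $g$-radical supplement of $N$.

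The only step that is not purely formal is the inclusion $N \leq \Radg(M)$ for small $N$. It depends on the characterization of $\Radg$ as the sum of the $g$-small submodules, or dually as the intersection of the essential maximal submodules, which the paper does not spell out. I would cite it from the $g$-radical supplement literature (e.g.\ \cite{kosar2014}). If instead $\Radg(M)$ is taken to be the intersection of the essential maximal submodules, the same conclusion follows: a small submodule lies in every maximal submodule, since otherwise $N + L = M$ for some maximal $L$, forcing $L = M$. Putting the two cases together, $M$ is $t$-$g$-radical supplemented.
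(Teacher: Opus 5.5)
Your proof is correct and follows the paper's argument. It uses the same case split, with $J = 0$ when $N = M$ and $J = M$ when $N$ is proper (hence small by hollowness), and the same observation that $0$ and $M$ are trivially $t$-sum terms. The only difference is how you justify $N \leq \Radg(M)$. The paper uses the chain $N \leq \Rad(M) \leq \Radg(M)$, whereas you argue that small submodules are $g$-small; your alternative argument via maximal submodules is exactly the paper's chain.
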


\begin{proof}
Let $Q \leq M$.
If $Q = M$, take $J = \{0\}$: $Q + \{0\} = M$ and
$\{0\} = Q \cap \{0\} \leq \Radg(\{0\})$, and $\{0\}$ is a $t$-sum term.
If $Q \neq M$, then by hollowness $Q \ll M$.
Take $J = M$: $Q + M = M$ and
$Q = Q \cap M \leq \Rad(M) \leq \Radg(M) = \Radg(J)$.
Since $M$ itself is a $t$-sum term, $M$ is $t$-$g$-radical supplemented.
\end{proof}

\begin{theorem}\label{thm:local}
Every local module is $t$-$g$-radical supplemented.
\end{theorem}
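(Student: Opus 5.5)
The plan is to reduce the statement to Theorem~\ref{thm:hollow}: a local module is a hollow module, so it is $t$-$g$-radical supplemented by that theorem.

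First, recall that $M$ is local when it has a largest proper submodule $N$, necessarily $N = \Rad(M)$. We must show every proper submodule $Q \neq M$ is small in $M$.

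\begin{itemize}
\item Let $Q \lneq M$ and suppose $Q + L = M$ for some $L \leq M$.
\item If $L \neq M$, then both $Q$ and $L$ are proper, so $Q, L \leq N$.
\item Hence $M = Q + L \leq N \lneq M$, a contradiction.
\end{itemize}

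Therefore $L = M$, so $Q \ll M$. Moreover $M \neq 0$, because a local module has a proper submodule. So $M$ is hollow.

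Applying Theorem~\ref{thm:hollow} gives the conclusion. For a self-contained check, the explicit supplements are as follows.
\begin{itemize}
\item For $Q = M$, take $J = 0$.
\item For $Q \neq M$, take $J = M$. Then $Q \cap M = Q \leq \Rad(M) \leq \Radg(M)$, and $M$ is trivially a $t$-sum term of itself: use $K = 0$, since $M + 0 = M$ and $M \cap 0 = 0$ is small in both.
\end{itemize}

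The only step needing any care is showing that local implies hollow; the rest is routine. One point to confirm is the inequality $\Rad(M) \leq \Radg(M)$, which Theorem~\ref{thm:hollow} also uses. It holds here because $\Radg(M)$ is the intersection of the essential maximal submodules. If $M$ has no essential maximal submodule, then $\Radg(M) = M$. Otherwise the unique maximal submodule $N$ is essential, and $\Radg(M) = N = \Rad(M)$. In either case $Q \leq N \leq \Radg(M)$.
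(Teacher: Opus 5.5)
Your proof is correct and follows the paper's intended route. The paper gives no separate argument for Theorem~\ref{thm:local}; as its summary table indicates, it relies on local modules being hollow and then applies Theorem~\ref{thm:hollow}, which is exactly your reduction. Your side check that $\Rad(M) \leq \Radg(M)$ is fine, and in fact holds for every module, since $\Radg(M)$ is an intersection over a subfamily of the maximal submodules.
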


\begin{theorem}\label{thm:simple}
Every simple module is $t$-$g$-radical supplemented.
\end{theorem}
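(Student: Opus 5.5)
The plan is to reduce to Theorem~\ref{thm:hollow}: every simple module is hollow. Let $M$ be simple. Its only submodules are $\{0\}$ and $M$. Every proper submodule, namely $\{0\}$, is small in $M$, since $\{0\} + L = M$ forces $L = M$. Hence $M$ is hollow, and Theorem~\ref{thm:hollow} gives the conclusion. Alternatively, a simple module is cyclic with a unique maximal submodule $\{0\}$, so it is local and Theorem~\ref{thm:local} applies. I would present the hollow route, since that theorem already has a proof in the text.

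For completeness I would also give the direct two-case check, which just specializes the proof of Theorem~\ref{thm:hollow}. For $N = M$, take $J = \{0\}$. Then $M + \{0\} = M$ and $M \cap \{0\} = \{0\} \leq \Radg(\{0\})$. Moreover $\{0\}$ is a $t$-sum term, witnessed by $K = M$, since $\{0\} \cap M = \{0\}$ is small in both $\{0\}$ and $M$.

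For $N = \{0\}$, take $J = M$. Then $\{0\} + M = M$ and $\{0\} \cap M = \{0\} \leq \Radg(M)$. Also $M$ is a $t$-sum term of itself, witnessed by $K = \{0\}$.

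There is no real obstacle here. The one point to check is that each chosen $J$ is a $t$-sum term, which I verified above via the trivial complements. The $g$-radical condition holds automatically, because the intersection is zero in both cases, so nothing about $\Radg$ of a simple module needs to be computed.
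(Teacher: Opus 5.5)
Your proposal is correct. Your direct two-case check (take $J = M$ for $N = \{0\}$ and $J = \{0\}$ for $N = M$, with the trivial complements witnessing the $t$-sum-term condition) is exactly the paper's proof.

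Your preferred reduction (simple $\Rightarrow$ hollow, then Theorem~\ref{thm:hollow}) is also valid. It is slightly less self-contained, since it leans on the general hollow argument, including $\Rad(M) \leq \Radg(M)$. The direct check needs nothing about $\Radg$, because both intersections are zero.
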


\begin{proof}
A simple module has only trivial submodules.
For $N = \{0\}$, take $J = M$; for $N = M$, take $J = \{0\}$.
In each case $J$ is a $t$-sum term and a $g$-radical supplement of $N$.
\end{proof}

\begin{theorem}\label{thm:semisimple}
Every semisimple module is $t$-$g$-radical supplemented.
\end{theorem}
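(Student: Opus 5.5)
For a semisimple module $M$, the plan is to use the fact that every submodule is a direct summand. Then, for any $N \leq M$, I would take a complement $J$ with $M = N \oplus J$ and check that $J$ satisfies both requirements of the definition of a $t$-$g$-radical supplemented module.

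First, $N + J = M$ and $N \cap J = \{0\}$. Since $\{0\} \leq \Radg(J)$ holds trivially, $J$ is a $g$-radical supplement of $N$ in $M$. Nothing about the generalized radical of a semisimple module is needed here, which is why this is the cleanest choice.

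Second, $J$ must be a $t$-sum term of $M$. Take $K = N$ in the definition of $t$-sum term: $J + N = M$ and $J \cap N = \{0\}$. The zero submodule is small in every module, so $\{0\} \ll J$ and $\{0\} \ll N$. This is just the remark after the definition of $t$-sum term that every direct summand is a $t$-sum term. The degenerate cases $N = M$ (then $J = \{0\}$) and $N = \{0\}$ (then $J = M$) are covered by the same argument, exactly as in the proof of Theorem~\ref{thm:simple}.

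The only substantive input is the standard fact that every submodule of a semisimple module is a direct summand, so no real obstacle is expected. If one wanted to avoid that fact, an alternative would be to write $M$ as a direct sum of simple modules and try to combine Theorem~\ref{thm:simple} with Corollary~\ref{cor:finite_tsum}. That route only handles finite sums, however, so the direct-complement argument above is the one I would use.
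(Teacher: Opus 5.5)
Your proof is correct and follows essentially the same route as the paper. The paper also takes a direct complement $K$ of $N$, notes $N \cap K = 0 \leq \Radg(K)$, and uses that a direct summand is a $t$-sum term; you just spell out that last step explicitly with $N$ as the partner and $\{0\}$ small in both.
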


\begin{proof}
Every submodule $N \leq M$ is a direct summand: $M = N \oplus K$ for some
$K \leq M$.
Then $K$ is a direct summand, hence a $t$-sum term, and
$N \cap K = \{0\} \leq \Radg(K)$.
So $M$ is $t$-$g$-radical supplemented.
\end{proof}

The following specific modules are $t$-$g$-radical supplemented by the
theorems above together with direct verification:
\begin{alignat*}{2}
  &\mathbb{Z}/p\mathbb{Z}   &&\quad\text{(simple)},\\
  &\mathbb{Z}/p^n\mathbb{Z} &&\quad\text{(local)},\\
  &\mathbb{Z}/6\mathbb{Z} \cong \mathbb{Z}/2\mathbb{Z} \oplus \mathbb{Z}/3\mathbb{Z}
                             &&\quad\text{(semisimple)},\\
  &\mathbb{Z}/12\mathbb{Z}  &&\quad\text{(finite $t$-sum of local modules)},\\
  &\mathbb{Z}/2\mathbb{Z} \oplus \mathbb{Z}/2\mathbb{Z}
                             &&\quad\text{(semisimple)}.
\end{alignat*}

\subsection{The Strict Extension}

\begin{theorem}\label{thm:Q}
$\mathbb{Q}$ (as a $\mathbb{Z}$-module) is $t$-$g$-radical supplemented
but not supplemented.
\end{theorem}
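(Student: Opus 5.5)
The plan is to take $J=\mathbb{Q}$ for every submodule $N$, exactly as in the proof of Theorem~\ref{thm:hollow}, but to use the generalized radical rather than smallness. The key fact is that $\Radg(\mathbb{Q})=\mathbb{Q}$. The generalized radical is the sum of the $g$-small submodules, or equivalently the intersection of the essential maximal submodules. Since $\mathbb{Q}$ is divisible, it has no maximal submodules at all. The empty intersection is therefore all of $\mathbb{Q}$, so $\Rad(\mathbb{Q})=\Radg(\mathbb{Q})=\mathbb{Q}$.

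With this in hand, fix any $N\leq\mathbb{Q}$ and set $J=\mathbb{Q}$. Then $N+J=\mathbb{Q}$, and
\[
N\cap J=N\leq\mathbb{Q}=\Radg(\mathbb{Q})=\Radg(J).
\]
It remains to see that $J=\mathbb{Q}$ is a $t$-sum term of $\mathbb{Q}$. This holds with complement $K=\{0\}$: we have $\mathbb{Q}+0=\mathbb{Q}$, and $\mathbb{Q}\cap 0=0$ is small in both $\mathbb{Q}$ and $0$. Hence $\mathbb{Q}$ is $t$-$g$-radical supplemented.

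For the failure of supplementedness, I argue by contradiction. Suppose $\mathbb{Z}$ (viewed inside $\mathbb{Q}$) had a supplement $J$, so that $\mathbb{Z}+J=\mathbb{Q}$ and $\mathbb{Z}\cap J\ll J$.
\begin{itemize}
\item If $J$ were a proper subgroup, then $\mathbb{Q}/J\cong\mathbb{Z}/(\mathbb{Z}\cap J)$ would be cyclic. But every nonzero quotient of $\mathbb{Q}$ is divisible, and a nonzero divisible group is never cyclic. So $J=\mathbb{Q}$.
\item With $J=\mathbb{Q}$, the condition $\mathbb{Z}\cap J\ll J$ becomes $\mathbb{Z}\ll\mathbb{Q}$. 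This is false: we have $\mathbb{Z}+L=\mathbb{Q}$ with $L\neq\mathbb{Q}$, where $L=\{a/b:\ b \text{ odd}\}$ and $b$ ranges over odd integers. To check this, write any $x\in\mathbb{Q}$ as $x=m/(2^k b)$ with $b$ odd; since $2^k$ and $b$ are coprime, it splits as a sum of a fraction with denominator $2^k$ and one with denominator $b$. A further splitting is needed to absorb the $2$-power part into $\mathbb{Z}$, and at this point it is cleaner to use a standard fact.
\end{itemize}
The standard fact is that $\mathbb{Z}$ is not small in $\mathbb{Q}$, since $\mathbb{Q}=\mathbb{Z}+\mathbb{Z}[1/p]$ fails but $\mathbb{Q}$ is the union of a chain of cyclic-by-proper decompositions. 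In practice I would instead cite the known result that $\mathbb{Q}_{\mathbb{Z}}$ is not supplemented, since a supplemented module with $\Rad=M$ would have to be hollow-like. Alternatively, I would observe that a supplemented module $M$ satisfies: $M/\Rad(M)$ semisimple and every proper submodule $N$ with $N+\Rad M=M$ small. Here $\Rad(\mathbb{Q})=\mathbb{Q}$, so every submodule would be small, making $\mathbb{Q}$ hollow. But $\mathbb{Q}=\mathbb{Z}_{(2)}+\mathbb{Z}_{(3)}$ is a sum of two proper subgroups, where $\mathbb{Z}_{(p)}$ denotes the localization.

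The verification of $\mathbb{Q}=\mathbb{Z}_{(2)}+\mathbb{Z}_{(3)}$ is a partial-fractions check, and it is the main obstacle only in bookkeeping. Write $x=m/(2^a3^b c)$ with $\gcd(c,6)=1$. Since $\gcd(2^a,3^bc)=1$, Bézout splits $x=u/(2^a c')+v/(3^bc'')$ with $c',c''$ prime to $6$. The first summand lies in $\mathbb{Z}_{(3)}$ and the second in $\mathbb{Z}_{(2)}$, as needed.
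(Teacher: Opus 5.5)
Your first half (take $J=\mathbb{Q}$, use $\Radg(\mathbb{Q})=\mathbb{Q}$, and the trivial $t$-sum $\mathbb{Q}+0$) is exactly the paper's argument. For the second half the paper only cites the classical fact, so a bare citation would match it. The self-contained arguments you offer for non-supplementedness, however, are wrong.

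First, $\mathbb{Z}$ \emph{is} small in $\mathbb{Q}$. Your own first bullet shows that $\mathbb{Z}+J=\mathbb{Q}$ forces $J=\mathbb{Q}$, which is precisely $\mathbb{Z}\ll\mathbb{Q}$. Indeed every cyclic subgroup of $\mathbb{Q}$ is small, because $\Rad(\mathbb{Q})=\mathbb{Q}$. So $\mathbb{Q}$ is a supplement of $\mathbb{Z}$, and the test submodule $N=\mathbb{Z}$ can never yield a contradiction. Your witness also fails: $\mathbb{Z}\subseteq\mathbb{Z}_{(2)}$ gives $\mathbb{Z}+\mathbb{Z}_{(2)}=\mathbb{Z}_{(2)}\not\ni 1/2$. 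The ``standard fact'' you then invoke is false.

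Second, the principle ``supplemented with $\Rad(M)=M$ implies hollow'' is false. $\mathbb{Z}_{p^{\infty}}\oplus\mathbb{Z}_{p^{\infty}}$ is Artinian, hence supplemented, equals its own radical, and is not hollow. So your (correct) Bézout decomposition $\mathbb{Q}=\mathbb{Z}_{(2)}+\mathbb{Z}_{(3)}$ does not by itself finish the proof.

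What is missing is the property specific to $\mathbb{Q}$: its only divisible subgroups are $0$ and $\mathbb{Q}$. Let $K$ be a supplement of $N=\mathbb{Z}_{(2)}$, and suppose $K$ had a maximal subgroup $P$. Then $N\cap K\ll K$ forces $N\cap K\le P$. By the modular law, $\mathbb{Q}/(N+P)\cong K/\bigl(P+(N\cap K)\bigr)=K/P$ would then be simple, contradicting that $\mathbb{Q}$ has no maximal subgroups. Hence $K$ has no maximal subgroups, i.e.\ $K$ is divisible, so $K\in\{0,\mathbb{Q}\}$. Since $N\neq\mathbb{Q}$, we get $K=\mathbb{Q}$. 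Then $\mathbb{Z}_{(2)}=N\cap K\ll\mathbb{Q}$, contradicting $\mathbb{Z}_{(2)}+\mathbb{Z}_{(3)}=\mathbb{Q}$ with $\mathbb{Z}_{(3)}\neq\mathbb{Q}$.

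This is a special case of the general fact $\Rad(K)=K\cap\Rad(M)$ for supplements $K$. The step ``$K$ divisible $\Rightarrow K\in\{0,M\}$'' is exactly what fails in $\mathbb{Z}_{p^{\infty}}\oplus\mathbb{Z}_{p^{\infty}}$, where the supplement $0\oplus\mathbb{Z}_{p^{\infty}}$ of $\mathbb{Z}_{p^{\infty}}\oplus 0$ is divisible and proper.
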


\begin{proof}
Since $\mathbb{Q}$ has no maximal submodules, $\Radg(\mathbb{Q}) = \mathbb{Q}$.
For any $A \leq \mathbb{Q}$, take $K = \mathbb{Q}$:
$A + \mathbb{Q} = \mathbb{Q}$ and
$A \cap \mathbb{Q} = A \leq \mathbb{Q} = \Radg(\mathbb{Q}) = \Radg(K)$.
Moreover $\mathbb{Q} = \mathbb{Q} + 0$, $\mathbb{Q} \cap 0 = 0 \ll \mathbb{Q}$
and $\ll 0$, so $\mathbb{Q}$ is a $t$-sum term of itself.
Hence $\mathbb{Q}$ is $t$-$g$-radical supplemented.
That $\mathbb{Q}$ is not supplemented is classical; see \cite{clark}.
\end{proof}

\begin{theorem}\label{thm:prufer}
$\mathbb{Z}_{p^{\infty}}$ (the Pr\"{u}fer $p$-group) is $t$-$g$-radical
supplemented but not supplemented.
\end{theorem}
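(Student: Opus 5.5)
The plan treats the two assertions separately, following the proof of Theorem~\ref{thm:Q} as closely as possible.

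\emph{First assertion.} Put $M=\mathbb{Z}_{p^{\infty}}$. I would argue exactly as for $\mathbb{Q}$. Every proper subgroup of $M$ is finite, of the form $A_n=\langle 1/p^n+\mathbb{Z}\rangle$. $M$ is divisible, so $pM=M$ and $M$ has no maximal submodules. Hence $\Rad(M)=M$, and since $\Rad(M)\leq\Radg(M)$ we get $\Radg(M)=M$.

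Given any $A\leq M$, I take $K=M$. Then $A+K=M$ and $A\cap K=A\leq M=\Radg(K)$. Moreover $M=M+0$ with $M\cap 0=0\ll M$ and $0\ll 0$, so $M$ is a $t$-sum term of itself. Alternatively, $M$ is hollow because every proper subgroup $A_n$ is small, so Theorem~\ref{thm:hollow} applies directly. Either route gives that $M$ is $t$-$g$-radical supplemented.

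\emph{Second assertion.} Here I would run the same template as in Theorem~\ref{thm:Q}. First record that $\Radg(M)=\Rad(M)=M$ is not small in $M$, since $M+0=M$ with $0\neq M$. So the criterion of Theorem~\ref{thm:rad_small}, which would force supplementedness, is unavailable. Then appeal to the classical literature for the failure of supplementedness, as the paper does for $\mathbb{Q}$ via \cite{clark}.

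\emph{Main obstacle.} This step is where I expect the plan to break, and I would check it before anything else. Under the ordinary definition of supplement recalled in the Introduction, $M$ is hollow. For a proper $N$, $M$ itself is a supplement, since $N+M=M$ and $N\cap M=N\ll M$. For $N=M$, the supplement is $0$. So $M$ would in fact be supplemented.

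Consequently, the claim ``not supplemented'' can only be established if ``supplemented'' is read in the stronger sense presumably intended by the cited source, for instance a condition involving $\Rad(M)\ll M$ or a cofinite or ample variant. The first concrete task is therefore to pin down that intended definition from \cite{clark}. I would then verify the failure through the non-smallness of $\Rad(M)=M$ established above. If no such stronger reading is available, the second half of the statement cannot be proved as written, and only the first assertion survives.
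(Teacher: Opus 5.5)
Your argument for the first assertion is the paper's argument. For every $A\leq M$ you take $K=M$, use $\Radg(M)=M$, and observe that $M$ is a $t$-sum term of itself. Your divisibility argument supplies the justification $\Rad(M)=M\leq\Radg(M)$, which the paper simply asserts. Your alternative route via Theorem~\ref{thm:hollow} is equally valid.

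On the second assertion your diagnosis is correct, and the paper's proof contains no argument for it at all. In fact the paper's own key step says that the only $K$ with $C_m+K=\mathbb{Z}_{p^{\infty}}$ is $K=\mathbb{Z}_{p^{\infty}}$. That is exactly the statement that every proper submodule is small, i.e.\ that $\mathbb{Z}_{p^{\infty}}$ is hollow. Then $M$ is a supplement of each proper $C_m$ and $0$ is a supplement of $M$, so $\mathbb{Z}_{p^{\infty}}$ is supplemented. It is even amply supplemented and lifting, so the ample and cofinite variants you mention hold as well. No reading of the cited literature can repair the clause. Only the first assertion is a theorem, and among the paper's examples the strictness of the inclusion is witnessed by $\mathbb{Q}$ alone.

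One step of your fallback plan should be dropped. You cannot use $\Rad(M)=M\not\ll M$ to detect failure of supplementedness. Theorem~\ref{thm:rad_small} is only a sufficient condition, and $\mathbb{Z}_{p^{\infty}}$ is itself a supplemented module whose radical is not small.
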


\begin{proof}
The submodules of $\mathbb{Z}_{p^{\infty}}$ form the chain
$\{0\} = C_0 \subset C_1 \subset \cdots \subset \mathbb{Z}_{p^{\infty}}$.
For any $N = C_m$, the only submodule $K$ with
$N + K = \mathbb{Z}_{p^{\infty}}$ is $K = \mathbb{Z}_{p^{\infty}}$ itself.
Then $N \cap \mathbb{Z}_{p^{\infty}} = N
  \leq \Radg(\mathbb{Z}_{p^{\infty}}) = \mathbb{Z}_{p^{\infty}} = \Radg(K)$.
Since $\mathbb{Z}_{p^{\infty}}$ is a $t$-sum term of itself,
$\mathbb{Z}_{p^{\infty}}$ is $t$-$g$-radical supplemented.
\end{proof}

\subsection{Non-Example}

\begin{example}\label{ex:Z}
$\mathbb{Z}$ (as a $\mathbb{Z}$-module) is \emph{not} $t$-$g$-radical
supplemented.
Since
$\Radg(\mathbb{Z}) = \bigcap\{p\mathbb{Z} \mid p\text{ prime}\} = 0$,
every putative $g$-radical supplement $J$ of $2\mathbb{Z}$ must satisfy
$2\mathbb{Z} \cap J \leq \Radg(J) \leq \Radg(\mathbb{Z}) = 0$.
But any $J$ with $2\mathbb{Z} + J = \mathbb{Z}$ and
$2\mathbb{Z} \cap J = 0$ would force $\mathbb{Z} = 2\mathbb{Z} \oplus J$,
which is impossible since $2\mathbb{Z}$ is not a direct summand of
$\mathbb{Z}$.
\end{example}

\subsection{Summary Table}

\begin{center}
\renewcommand{\arraystretch}{1.3}
\begin{tabular}{>{\centering\arraybackslash}m{3.5cm}
                >{\raggedright\arraybackslash}m{6.5cm}
                >{\centering\arraybackslash}m{3cm}}
\toprule
\textbf{Module} & \textbf{Reason / Proof Method} & \textbf{$t$-$g$-rad.\ supp.?} \\
\midrule
$\mathbb{Z}/p\mathbb{Z}$
  & Simple $\Rightarrow$ $t$-$g$-rad.\ supp.
  & $\checkmark$ \\
$\mathbb{Z}/p^{n}\mathbb{Z}$
  & Local (hollow) $\Rightarrow$ $t$-$g$-rad.\ supp.
  & $\checkmark$ \\
$\mathbb{Z}/6\mathbb{Z}$, $\mathbb{Z}/12\mathbb{Z}$
  & Semisimple / mixed
  & $\checkmark$ \\
$\mathbb{Z}/2\mathbb{Z} \oplus \mathbb{Z}/2\mathbb{Z}$
  & Semisimple
  & $\checkmark$ \\
$\mathbb{Q}$
  & $\Radg(\mathbb{Q}) = \mathbb{Q}$
  & $\checkmark$ (not supplemented) \\
$\mathbb{Z}_{p^{\infty}}$
  & Pr\"{u}fer group
  & $\checkmark$ (not supplemented) \\
f.g.\ / semiperfect ring
  & Local decomposition
  & $\checkmark$ \\
$D/I$ (Dedekind domain)
  & Local Artinian factors
  & $\checkmark$ \\
$\mathbb{Z}$
  & $\Radg(\mathbb{Z}) = 0$, no $g$-rad.\ supp.
  & $\times$ \\
\bottomrule
\end{tabular}
\end{center}

% -----------------------------------------------------------

\end{document}